\newtheorem{theorem}{Theorem}[section]
\newtheorem{lemma}[theorem]{Lemma}
\numberwithin{figure}{section}
\theoremstyle{definition}
\theoremstyle{remark}
\newtheorem{remark}[theorem]{Remark}
\numberwithin{equation}{section}
	\DeclareMathOperator{\dist}{dist}
	\DeclareMathOperator{\loc}{loc}
	\DeclareMathOperator*{\esssup}{ess\,sup}
\begin{document}

\title[Estimates for the principal Dirichlet-Laplacian eigenvalue]{On growth monotonicity estimates of the principal Dirichlet-Laplacian eigenvalue}

\author{V.~A.~Pchelintsev}

\begin{abstract}
In the present paper we obtain growth monotonicity estimates of the principal Dirichlet-Laplacian eigenvalue in bounded non-Lipschitz domains. The proposed method is based on composition operators generated by quasiconformal mappings and their applications to weighted Sobolev inequalities.
\end{abstract}
\maketitle
\footnotetext{\textbf{Key words and phrases:} Elliptic equations, Sobolev spaces, quasiconformal mappings.}
\footnotetext{\textbf{2010
Mathematics Subject Classification:} 35P15, 46E35, 30C65.}

\section{Introduction}

The goal of present work is to obtain the growth monotonicity estimates of the principal eigenvalue of the Dirichlet-Laplacian
\[
- \textrm{div}(\nabla u)=\lambda u\,\, \text{in} \,\, \Omega, \quad u=0 \,\, \text{on} \,\, \partial \Omega,
\]
generated by quasiconformal deformations of bounded non-Lipschitz domains $\Omega\subset\mathbb R^2$ satisfying to the quasihyperbolic boundary condition \cite{KOT01,KOT02}. This class of domains includes domains with non-rectifiable boundaries \cite{R01}.

Recall that a domain $\Omega$ satisfies the $\gamma$-quasihyperbolic boundary condition with some $\gamma>0$, if the growth condition on the quasihyperbolic metric
$$
k_{\Omega}(x_0,x)\leq \frac{1}{\gamma}\log\frac{\dist(x_0,\partial\Omega)}{\dist(x,\partial\Omega)}+C_0
$$
is satisfied for all $x\in\Omega$, where $x_0\in\Omega$ is a fixed base point and $C_0=C_0(x_0)<\infty$,
\cite{GM,H1}.

We will consider the Dirichlet eigenvalue problem in the weak formulation:
\[
\int\limits_{\Omega} \left\langle \nabla u(x), \nabla v(x) \right\rangle dx
= \lambda \int\limits_{\Omega} u(x) v(x)~dx, \quad v \in W_0^{1,2}(\Omega).
\]

It is known \cite{Henr,M} that in bounded domains $\Omega \subset \mathbb R^2$ the Dirichlet spectrum of the Laplace operator is discrete and can be written in the form of a non-decreasing sequence
\[
0< \lambda_1(\Omega) \leq \lambda_2(\Omega) \leq \ldots \leq \lambda_n(\Omega) \leq \ldots ,
\]
where each eigenvalue is repeated as many time as its multiplicity. So, by the Min-Max Principle (see, for example, \cite{Henr,M}) the first eigenvalue of the Dirichlet-Laplacian can be represented as
\[
\lambda_1(\Omega)= \inf_{u \in W_0^{1,2}(\Omega) \setminus \{0\}} \frac{\int\limits_{\Omega} |\nabla u(x)|^2~dx}{\int\limits_{\Omega} |u(x)|^2~dx}\,.
\]

It is known that exact calculations of Dirichlet-Laplacian eigenvalues are possible in a limited number of cases. So, estimates of Dirichlet eigenvalues are significant in the spectral theory of elliptic operators.

The lower estimates of Dirichlet eigenvalues directly connected to the Rayleigh-Faber-Krahn inequality \cite{F23,Kh25} which states that the disc minimizes the principal Dirichlet-Laplacian eigenvalue among all planar domains of the same area:
\begin{equation*}
\lambda_1(\Omega)\geq \lambda_1(\Omega^{\ast})=\frac{{j_{0,1}^2}}{R^2_{\ast}},
\end{equation*}
where $j_{0,1} \approx 2.4048$ is the first positive zero of the Bessel function $J_0$ and $\Omega^{\ast}$ is a disc of the same area as $\Omega$ with $R_{\ast}$ as its radius. Note that the Rayleigh-Faber-Krahn inequality was refined using the capacity method, see $\S 4.7$ in \cite{M}.

Another lower bound for the principal Dirichlet-Laplacian eigenvalue for a simply connected planar domain was obtained by Makai \cite{M65}:
\begin{equation}\label{M-H}
\lambda_1(\Omega)\geq \frac{\alpha}{\rho^2},
\end{equation}
where $\alpha =1/4$ and $\rho$ is the radius of the largest disc inscribed in $\Omega$. For convex domains, this lower bound with $\alpha = \pi^2/4$ was obtained by Hersch \cite{H60}.

In this paper we proposed estimates for the principal Dirichlet-Laplacian eigenvalue in terms of the quasihyperbolic geometry. For this we will use the method is based on connections between the quasiconformal mappings \cite{Ahl66} and composition operators on Sobolev spaces \cite{GG94,GU09,U93,VU02}. It permits us to obtain growth monotonicity estimates of the principal Dirichlet eigenvalue in domains with quasihyperbolic boundary conditions.

\vskip 0.2cm

The main results of the article:

\vskip 0.2cm

\textit{
If a non-Lipschitz domain $\Omega \subset \mathbb R^2$ satisfies to the quasihyperbolic boundary condition, then
\begin{equation}\label{low-est}
\lambda_1(\Omega) \geq \frac{\lambda_1(\Omega')}{K ||J_{\psi}\,|\,L^{\infty}(\Omega')||},
\end{equation}
where $\Omega' \subset \mathbb R^2$ is a bounded domain and $J_{\psi}$ is a Jacobian of the $K$-quasiconformal mapping $\psi:\Omega' \to \Omega$.}

\vskip 0.2cm

Let us illustrate the above result with an example. The homeomorphism
\[
\varphi(z)= \sqrt{a^2+1}z+a \overline{z}, \quad z=x+iy, \quad a\geq 0,
\]
is a $K$-quasiconformal with $K=\frac{\sqrt{a^2+1}+a}{\sqrt{a^2+1}-a}$ and maps the unit disc $\mathbb D$ onto the interior of ellipse $\Omega_e$ with semi-axes $\sqrt{a^2+1}+a$ and $\sqrt{a^2+1}-a$.
In this case the Jacobian $J(z,\varphi)=|\varphi_{z}|^2-|\varphi_{\overline{z}}|^2=1$. Hence, we have
\[
\lambda_1(\Omega_e) \geq \frac{\sqrt{a^2+1}-a}{\sqrt{a^2+1}+a} j_{0,1}^2,
\]
where $j_{0,1} \approx 2.4048$ is the first positive zero of the Bessel function $J_0$. Note that for $a \leq 1/8$ this estimate is better than estimate \eqref{M-H} with $\alpha = \pi^2/4$, i.e.
\[
\frac{\sqrt{a^2+1}-a}{\sqrt{a^2+1}+a} j_{0,1}^2 > \frac{\pi^2}{4(\sqrt{a^2+1}-a)^2}.
\]

\vskip 0.2cm

A classical result (see, for example, \cite{Henr,M}) tells us that the Dirichlet eigenvalue $\lambda(\Omega)$ satisfies a so-called domain monotonicity property, namely
\[
\text{if} \quad \Omega_1 \subseteq \Omega_2 \quad \text{then} \quad \lambda(\Omega_1)- \lambda(\Omega_2) \geq 0.
\]

Given this property and inequality \eqref{low-est} we obtain growth monotonicity estimates of the principal Dirichlet eigenvalue of the Dirichlet-Laplacian.

\vskip 0.2cm

\textit{
If a non-Lipschitz domain $\Omega \subset \mathbb R^2$ satisfies to the quasihyperbolic boundary condition and such that $\Omega \subset \Omega'$, then
\[
\lambda_1(\Omega)-\lambda_1(\Omega') \geq \left(\frac{1}{K ||J_{\psi}\,|\,L^{\infty}(\Omega')||}-1\right)\lambda_1(\Omega'),
\]
where $\Omega' \subset \mathbb R^2$ is a bounded domain and $J_{\psi}$ is a Jacobian of the $K$-quasiconformal mapping $\psi:\Omega' \to \Omega$.}

\vskip 0.2cm

As an example, we consider growth monotonicity estimates of the principal Dirichlet-Laplacian eigenvalue in domains type rose petals $\Omega_a$, where
$$
\Omega_a:=\left\{(\rho, \theta) \in \mathbb R^2:\rho=2a\cos(2 \theta), \quad -\frac{\pi}{4} \leq \theta \leq \frac{\pi}{4}\right\}
$$
is the image of the unit disc $\mathbb D$ under the $2$-quasiconformal mapping
\[
\psi(z)=a (z+1)^{\frac{3}{4}} (\overline{z}+1)^{\frac{1}{4}}, \quad z=x+iy, \quad 0<a<1.
\]
In this case the quality $K ||J_{\psi}\,|\,L^{\infty}(\mathbb D)||=a^2<1$ and $\mathbb D \supset \Omega_a$. Hence we have
\[
\lambda_1(\Omega_a)-\lambda_1(\mathbb D) \geq \frac{1-a^2}{a^2} j_{0,1}^2,
\]
where $j_{0,1} \approx 2.4048$ is the first positive zero of the Bessel function $J_0$.

Others examples growth monotonicity estimates of the principal Dirichlet-Laplacian eigenvalue in non-Lipschitz domains will be given in Section 4.

In works \cite{BGU15,BGU16,GPU17,GPU19,GU16,GU17} using approaches are based on the geometric theory of composition operators on Sobolev spaces were obtained the spectral estimates for Dirichlet and Neumann eigenvalues of the Laplace operator for a large class of rough domains satisfying quasihyperbolic boundary conditions. These composition operators are generated by conformal mappings, quasiconformal mappings and their generalizations.

\section{Sobolev spaces and quasiconformal mappings}

In this section we recall basic facts about composition operators on Lebesgue and Sobolev spaces and also the quasiconformal mappings theory.

The following theorem about composition operators on Lebesgue spaces is well known (see, for example, \cite{VU02}):
\begin{theorem}
Let $\varphi :\Omega \to {\Omega'}$ be a weakly differentiable homeomorphism between two domains $\Omega$ and $\Omega'$.
Then the composition operator
\[
\varphi^{*}: L^r(\Omega') \to L^s(\Omega),\,\,\,1 \leq s \leq r< \infty,
\]
is bounded, if and only if $\varphi^{-1}$ possesses the Luzin $N$-property and
\[
\biggr(\int\limits _{\Omega'}\left|J(y,\varphi^{-1})\right|^{\frac{r}{r-s}}\, dy\biggr)^{\frac{r-s}{rs}}=K< \infty,\,\,\,1 \leq s<r< \infty,
\]
\[
\esssup\limits_{y \in \Omega'}\left|J(y,\varphi^{-1})\right|^{\frac{1}{s}}=K< \infty,\,\,\,1 \leq s=r< \infty.
\]
The norm of the composition operator $\|\varphi^{*}\|=K$.
\end{theorem}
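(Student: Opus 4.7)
The plan is to reduce everything to the change of variables formula $\int_{\Omega} f(\varphi(x))\,dx = \int_{\Omega'} f(y)\,|J(y,\varphi^{-1})|\,dy$ applied to $f=|u|^s$, then estimate the resulting weighted integral by H\"older's inequality. The Luzin $N$-property of $\varphi^{-1}$ enters as the precise condition that makes this change of variables valid for an arbitrary weakly differentiable homeomorphism, and also that makes the pullback $\varphi^{*}u = u\circ\varphi$ well-defined as an equivalence class in $L^s(\Omega)$: if $\varphi^{-1}$ did not possess the $N$-property there would exist $A\subset\Omega'$ with $|A|=0$ but $|\varphi^{-1}(A)|>0$, so that $\mathbf{1}_A\equiv 0$ in $L^r(\Omega')$ while $\varphi^{*}\mathbf{1}_A = \mathbf{1}_{\varphi^{-1}(A)}\not\equiv 0$ in $L^s(\Omega)$, contradicting boundedness.

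For sufficiency, assuming the $N$-property I would apply the change of variables to write
\[
\|\varphi^{*}u\|_{L^s(\Omega)}^{s}=\int_{\Omega'}|u(y)|^{s}\,|J(y,\varphi^{-1})|\,dy .
\]
In the case $1\le s<r<\infty$, H\"older's inequality with conjugate exponents $r/s$ and $r/(r-s)$ gives
\[
\int_{\Omega'}|u|^{s}|J(\cdot,\varphi^{-1})|\,dy \le \Big(\int_{\Omega'}|u|^{r}\,dy\Big)^{s/r}\Big(\int_{\Omega'}|J(\cdot,\varphi^{-1})|^{r/(r-s)}\,dy\Big)^{(r-s)/r},
\]
so taking $s$-th roots yields $\|\varphi^{*}u\|_{L^s(\Omega)}\le K\|u\|_{L^r(\Omega')}$ with exactly the constant in the statement. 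The borderline case $s=r$ is immediate by pulling the essential supremum of $|J(\cdot,\varphi^{-1})|$ out of the integral.

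For necessity of the integral (resp.\ essential supremum) condition, and for the sharpness of the constant $\|\varphi^{*}\|=K$, I would exploit the sharpness of H\"older. In the case $s<r$ the extremal choice is formally $|u(y)| = |J(y,\varphi^{-1})|^{1/(r-s)}$; since this may not lie in $L^r(\Omega')$ I would truncate and consider $u_n(y)=\min\{n,|J(y,\varphi^{-1})|^{1/(r-s)}\}\mathbf{1}_{E_n}$ for an exhaustion $E_n\nearrow\Omega'$ by sets of finite measure, verify that equality is achieved in the limit, and conclude both that $|J(\cdot,\varphi^{-1})|^{r/(r-s)}\in L^{1}(\Omega')$ and that $\|\varphi^{*}\|\ge K$. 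In the supremum case $s=r$, testing against indicators $\mathbf{1}_{B_\rho(y_0)}$ of small balls around Lebesgue points of $|J(\cdot,\varphi^{-1})|$ and letting $\rho\to 0$ recovers the essential supremum as a lower bound on $\|\varphi^{*}\|^{s}$.

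The main technical obstacle I expect is the justification of the change of variables formula for a merely weakly differentiable homeomorphism: one needs that the classical differential exists almost everywhere, that the areas transform by $|J|$, and that no mass is lost on the exceptional sets. This is precisely the role of the Luzin $N$-property of $\varphi^{-1}$ (combined with weak differentiability of $\varphi$), and citing the standard change of variables theorem in this setting from \cite{VU02} is what makes the whole argument rigorous rather than purely formal.
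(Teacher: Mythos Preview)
The paper does not prove this statement: it is quoted as a known result from \cite{VU02} (``The following theorem about composition operators on Lebesgue spaces is well known\ldots''), with no argument given. So there is nothing to compare your proposal against in the paper itself.

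That said, your sketch is the standard proof and is essentially correct. The sufficiency direction via the change of variables formula and H\"older with exponents $r/s$ and $r/(r-s)$ is exactly right, as is the observation that failure of the Luzin $N$-property for $\varphi^{-1}$ produces a null function whose pullback has positive $L^s$-norm. For the sharpness/necessity part your truncation argument is the right idea; one point worth making precise is that $J(y,\varphi^{-1})$ here should be read as the volume derivative $\lim_{r\to 0}|\varphi^{-1}(B(y,r))|/|B(y,r)|$ (which exists a.e.\ for any homeomorphism by differentiation of measures), since the hypotheses only give weak differentiability of $\varphi$, not of $\varphi^{-1}$. With that interpretation, the change of variables identity $\int_{\Omega}|u\circ\varphi|^{s}\,dx=\int_{\Omega'}|u|^{s}|J(\cdot,\varphi^{-1})|\,dy$ holds precisely under the $N$-property of $\varphi^{-1}$, which is the technical backbone of the argument in \cite{VU02} that you correctly flag.
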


Let $\Omega\subset\mathbb R^n$ be an open set. The Sobolev space $W^{1,p}(\Omega)$, $1\leq p\leq\infty$ is defined
as a Banach space of locally integrable weakly differentiable functions
$f:\Omega\to\mathbb{R}$ equipped with the following norm:
\[
\|f\mid W^{1,p}(\Omega)\|=\biggr(\int\limits _{\Omega}|f(x)|^{p}\, dx\biggr)^{\frac{1}{p}}+
\biggr(\int\limits _{\Omega}|\nabla f(x)|^{p}\, dx\biggr)^{\frac{1}{p}},
\]
where $\nabla f$ is the weak gradient of the function $f$. Recall that the Sobolev space $W^{1,p}(\Omega)$ coincides with the closure of the space of smooth functions $C^{\infty}(\Omega)$ in the norm of $W^{1,p}(\Omega)$.

The homogeneous seminormed Sobolev space $L^{1,p}(\Omega)$, $1\leq p<\infty$,
is the space of all locally integrable weakly differentiable functions equipped
with the following seminorm:
\[
\|f\mid L^{1,p}(\Omega)\|=\biggr(\int\limits _{\Omega}|\nabla f(x)|^{p}\, dx\biggr)^{\frac{1}{p}}.
\]

The Sobolev space $W^{1,p}_{0}(\Omega)$, $1 \leq p< \infty$, is the closure in the $W^{1,p}(\Omega)$-norm of the
space $C^{\infty}_{0}(\Omega)$ of all infinitely continuously differentiable functions with compact support in $\Omega$.

We consider the Sobolev spaces as Banach spaces of equivalence classes of functions up to a set of $p$-capacity zero \cite{M}.

Let $\Omega$ and ${\Omega'}$ be domains in $\mathbb R^n$. We say that
a homeomorphism $\varphi:\Omega\to{\Omega'}$ induces by the composition rule $\varphi^{\ast}(f)=f\circ\varphi$ a bounded composition operator
\[
\varphi^{\ast}:L^1_p({\Omega'})\to L^1_q(\Omega),\,\,\,1\leq q\leq p\leq\infty,
\]
if the composition $\varphi^{\ast}(f)\in L^1_q(\Omega)$
is defined quasi-everywhere in $\Omega$ and there exists a constant $K_{p,q}(\Omega)<\infty$ such that
\[
\|\varphi^{\ast}(f)\mid L^1_q(\Omega)\|\leq K_{p,q}(\Omega)\|f\mid L^1_p({\Omega'})\|
\]
for
any function $f\in L^1_p({\Omega'})$ \cite{U93}.

Recall that a mapping $\varphi:\Omega\to\mathbb R^n$ belongs to $L^{1,p}_{\loc}(\Omega)$,
$1\leq p\leq\infty$, if its coordinate functions $\varphi_j$ belong to $L^{1,p}_{\loc}(\Omega)$, $j=1,\dots,n$.
In this case the formal Jacobi matrix
$D\varphi(x)=\left(\frac{\partial \varphi_i}{\partial x_j}(x)\right)$, $i,j=1,\dots,n$,
and its determinant (Jacobian) $J(x,\varphi)=\det D\varphi(x)$ are well defined at
almost all points $x\in \Omega$. The norm $|D\varphi(x)|$ of the matrix
$D\varphi(x)$ is the norm of the corresponding linear operator $D\varphi (x):\mathbb R^n \rightarrow \mathbb R^n$ defined by the matrix $D\varphi(x)$.

Let $\varphi:\Omega\to{\Omega'}$ be weakly differentiable in $\Omega$. The mapping $\varphi$ is the mapping of finite distortion if $|D\varphi(z)|=0$ for almost all $x\in Z=\{z\in\Omega : J(z,\varphi)=0\}$.

A mapping $\varphi:\Omega\to\mathbb R^n$ possesses the Luzin $N$-property if a image of any set of measure zero has measure zero.
Mote that any Lipschitz mapping possesses the Luzin $N$-property.

The following theorem gives the analytic description of composition operators on Sobolev spaces:

\begin{theorem}
\label{CompTh} \cite{U93} A homeomorphism $\varphi:\Omega\to\Omega'$
between two domains $\Omega$ and $\Omega'$ induces a bounded composition
operator
\[
\varphi^{\ast}:L^{1,p}(\Omega')\to L^{1,q}(\Omega),\,\,\,1\leq q< p<\infty,
\]
 if and only if $\varphi\in W_{\loc}^{1,1}(\Omega)$, has finite distortion,
and
$$
K_{p,q}(\Omega)=\left(\int\limits_\Omega \left(\frac{|D\varphi(x)|^p}{|J(x,\varphi)|}\right)^\frac{q}{p-q}~dx\right)^\frac{p-q}{pq}<\infty.
$$
\end{theorem}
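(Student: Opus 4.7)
The plan is to handle the two implications separately, with sufficiency being a chain rule plus Hölder argument and necessity requiring a more delicate test function construction.

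\textbf{Sufficiency.} Assume $\varphi \in W^{1,1}_{\loc}(\Omega)$ has finite distortion and $K_{p,q}(\Omega) < \infty$. The analytic input is that such mappings admit a chain rule valid almost everywhere: for smooth $f$ one has $\nabla (f\circ\varphi)(x) = \nabla f(\varphi(x))\, D\varphi(x)$ a.e., and this extends to $f \in L^{1,p}(\Omega')$ by approximation. The finite distortion condition ensures that $|D\varphi(x)|=0$ a.e.\ on $Z = \{J(\cdot,\varphi)=0\}$, so the ratio $|D\varphi|^p / |J(\cdot,\varphi)|$ is well defined off a null set and the integrand in $K_{p,q}$ is unambiguous. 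From the pointwise estimate $|\nabla (f\circ\varphi)(x)| \leq |\nabla f(\varphi(x))|\cdot |D\varphi(x)|$ I would split
\[
|D\varphi(x)|^q = |J(x,\varphi)|^{q/p} \cdot \frac{|D\varphi(x)|^q}{|J(x,\varphi)|^{q/p}}
\]
and apply Hölder's inequality on $\Omega\setminus Z$ with conjugate exponents $p/q$ and $p/(p-q)$, obtaining
\[
\int_\Omega |\nabla (f\circ\varphi)(x)|^q\, dx \leq \left(\int_\Omega |\nabla f(\varphi(x))|^p |J(x,\varphi)|\, dx\right)^{q/p} K_{p,q}(\Omega)^q.
\]
The change of variables formula for $W^{1,1}_{\loc}$-homeomorphisms of finite distortion (the area formula) converts the first factor into $\|f \mid L^{1,p}(\Omega')\|^q$, completing the sufficiency with the bound $\|\varphi^\ast\| \leq K_{p,q}(\Omega)$.

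\textbf{Necessity.} Assume $\varphi^\ast$ is bounded. First, by testing the inequality against smooth cutoffs of the coordinate functions $f(y) = \eta(y)\, y_j$ on relatively compact subsets of $\Omega'$, one deduces that each $\varphi_j \in L^{1,q}_{\loc}(\Omega)$ and hence $\varphi \in W^{1,1}_{\loc}(\Omega)$. Finite distortion follows by inserting test functions that concentrate near $\varphi(Z)$ and observing that the resulting bound forces $|D\varphi|$ to vanish where $J(\cdot,\varphi)$ does. The quantitative integral bound on $K_{p,q}(\Omega)$ is the core technical step: for each compact $A \subset \Omega$, the restriction of $f \mapsto \|\varphi^\ast(f)\mid L^{1,q}(A)\|$ to $L^{1,p}(\Omega')$ is a bounded sublinear functional, and after change of variables on $\varphi(A)$ one identifies its dual norm with an integral of $(|D\varphi|^p/|J(\cdot,\varphi)|)^{q/(p-q)}$ over $A$. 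Exhausting $\Omega$ by such compacts and applying monotone convergence yields $K_{p,q}(\Omega) \leq \|\varphi^\ast\|$.

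\textbf{Main obstacle.} The hardest part is the necessity direction. A global operator norm bound must be decoded into the pointwise distortion quantity $|D\varphi(x)|^p / |J(x,\varphi)|$, and the matrix norm $|D\varphi|$ does not factor through any single scalar test function: one cannot simply pick an $f$ that sees all directions at once. The standard remedy is a duality/extremal argument coupled with a careful selection of locally linear or piecewise Lipschitz test functions oriented to probe $D\varphi$ along an approximate orthonormal frame, combined with a Vitali-type covering of $\Omega'$ so that local estimates can be summed into the claimed integral bound.
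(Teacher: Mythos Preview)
The paper does not prove this theorem at all: it is quoted verbatim from \cite{U93} as background and no proof is given or even sketched in the present article. There is therefore nothing in the paper to compare your argument against.

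That said, your outline is essentially the standard proof of this result as it appears in the original source and its descendants. The sufficiency direction is exactly the chain rule plus H\"older splitting you wrote, and your exponents are correct: with conjugate pair $p/q$ and $p/(p-q)$ the second factor becomes $\bigl(\int_\Omega (|D\varphi|^p/|J(\cdot,\varphi)|)^{q/(p-q)}\,dx\bigr)^{(p-q)/p}=K_{p,q}(\Omega)^q$, and the first factor collapses to $\|f\mid L^{1,p}(\Omega')\|^q$ via the area formula. The necessity direction you sketch is the right shape but, as you acknowledge, the actual execution (localized countably additive set functions, test functions aligned with $D\varphi$, covering arguments) is where all the work lies; your description of the ``main obstacle'' is accurate, and the dual-norm identification you allude to is precisely the mechanism used in \cite{U93,VU02}. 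Since the present paper treats the theorem as a black box, any correct proof would count as going beyond what the paper provides.
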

For non-homeomorphic mappings a similar result was obtain in \cite{VU02}.

Recall that a homeomorphism $\varphi: \Omega\to \Omega'$ is called a $K$-quasiconformal mapping if $\varphi\in W^{1,n}_{\loc}(\Omega)$ and there exists a constant $1\leq K<\infty$ such that
$$
|D\varphi(x)|^n\leq K |J(x,\varphi)|\,\,\text{for almost all}\,\,x\in\Omega.
$$

Note that quasiconformal mappings have a finite distortion and a mapping which is inverse to a quasiconformal mapping is also quasiconformal \cite{VGR}.

If $\varphi : \Omega \to \Omega'$ is a $K$-quasiconformal mapping then $\varphi$ is differentiable almost everywhere in $\Omega$ and
$$
|J(x,\varphi)|=J_{\varphi}(x):=\lim\limits_{r\to 0}\frac{|\varphi(B(x,r))|}{|B(x,r)|}\,\,\text{for almost all}\,\,x\in\Omega.
$$

\section{Weighted Sobolev inequality}

First of all, we recall two results concerning the non-weighted Sobolev inequality for a bounded domain ${\Omega'}\subset\mathbb R^2$ \cite{GPU2019} and a connection between composition operators on Sobolev spaces and the quasiconformal mappings theory \cite{VG75}. Namely:
\begin{theorem}
\label{PoinConst}
Let ${\Omega'}\subset\mathbb R^2$ be a bounded domain and $f \in W^{1,2}_0({\Omega'})$. Then
\begin{equation}\label{InPS}
\|f \mid L^{r}({\Omega'})\| \leq A_{r,2}({\Omega'}) \|\nabla f \mid L^{2}({\Omega'})\|, \,\,r \geq 2,
\end{equation}
where
\[
A_{r,2}({\Omega'}) \leq \inf\limits_{p\in \left(\frac{2r}{r+2},2\right)}
\left(\frac{p-1}{2-p}\right)^{\frac{p-1}{p}}
\frac{\left(\sqrt{\pi}\cdot\sqrt[p]{2}\right)^{-1}|{\Omega'}|^{\frac{1}{r}}}{\sqrt{\Gamma(2/p) \Gamma(3-2/p)}}.
\]
\end{theorem}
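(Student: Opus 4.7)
The plan is to interpolate through the sharp Sobolev embedding $L^{1,p}(\mathbb R^2)\hookrightarrow L^{p^*}(\mathbb R^2)$ at an intermediate exponent $p\in(1,2)$, and to apply H\"older's inequality twice: once on $f$ to drop the exponent from $p^*$ down to $r$, and once on $\nabla f$ to raise the exponent from $p$ up to $2$, absorbing the two deficits into powers of $|\Omega'|$. The constraint $p>2r/(r+2)$ is precisely what guarantees $p^*:=2p/(2-p)\ge r$, so that the first H\"older step is admissible; the free parameter $p$ will then produce the infimum at the end.

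In detail, I would fix $p\in(2r/(r+2),2)$ and extend $f\in W_0^{1,2}(\Omega')$ by zero to $\mathbb R^2$. The first H\"older step gives
\[
\|f\mid L^r(\Omega')\|\le |\Omega'|^{1/r-1/p^*}\,\|f\mid L^{p^*}(\Omega')\|.
\]
The second step invokes the classical Talenti--Aubin sharp Sobolev inequality
\[
\|f\mid L^{p^*}(\mathbb R^2)\|\le C(2,p)\,\|\nabla f\mid L^p(\mathbb R^2)\|,
\]
with
\[
C(2,p)=\frac{1}{\sqrt\pi\cdot\sqrt[p]{2}}\left(\frac{p-1}{2-p}\right)^{(p-1)/p}\frac{1}{\sqrt{\Gamma(2/p)\,\Gamma(3-2/p)}}.
\]
The third step uses H\"older once more to bound $\|\nabla f\mid L^p(\Omega')\|\le|\Omega'|^{1/p-1/2}\|\nabla f\mid L^2(\Omega')\|$. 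A quick computation shows that the exponent of $|\Omega'|$ telescopes:
\[
\tfrac{1}{r}-\tfrac{1}{p^*}+\tfrac{1}{p}-\tfrac{1}{2}=\tfrac{1}{r}-\tfrac{2-p}{2p}+\tfrac{2-p}{2p}=\tfrac{1}{r},
\]
producing exactly the volume factor $|\Omega'|^{1/r}$ in the stated bound.

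The real content is the explicit form of the sharp Sobolev constant $C(2,p)$, whose derivation via Schwarz symmetrization and Bliss's one-dimensional rearrangement inequality is classical but nontrivial; I would simply cite it rather than reprove it. After that, the only thing left is to observe that $C(2,p)\,|\Omega'|^{1/r}$ matches the expression inside the infimum verbatim, and to take the infimum over $p\in(2r/(r+2),2)$. The main point of care is that neither endpoint can be attained: at $p\downarrow 2r/(r+2)$ the first H\"older factor degenerates to $1$ but $C(2,p)$ blows up through the $\Gamma(3-2/p)$ factor, while at $p\uparrow 2$ the prefactor $((p-1)/(2-p))^{(p-1)/p}$ diverges; hence the optimum is genuinely interior and the infimum is the best one can extract from this route.
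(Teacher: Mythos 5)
The paper does not actually prove Theorem~\ref{PoinConst} --- it is quoted from \cite{GPU2019} --- and your derivation (H\"older from $L^{p^*}$ down to $L^r$ using $p>\frac{2r}{r+2}\Leftrightarrow p^*\ge r$, the sharp Talenti constant $C(2,p)$, H\"older from $L^p$ up to $L^2$ on the gradient, the telescoping of the volume exponents to $|\Omega'|^{1/r}$, and finally the infimum over $p$) is exactly the standard argument behind that cited result, and it is correct. One harmless slip in your closing aside: at $p\downarrow\frac{2r}{r+2}$ the constant $C(2,p)$ does \emph{not} blow up (there $3-2/p$ stays in $[\,2-2/r,\,2)$, away from the poles of $\Gamma$, and the prefactor is finite), but since the theorem only needs the estimate for each fixed $p$ in the open interval before taking the infimum, where or whether the infimum is attained is irrelevant to the proof.
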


\begin{lemma} \label{L4.1}
A homeomorphism $\varphi: \Omega\to {\Omega'}$ is a $K$-quasiconformal mapping if and only if $\varphi$ generates, by the composition rule $\varphi^{\ast}(f)=f\circ\varphi$, an isomorphism of Sobolev spaces $L^{1,n}(\Omega)$ and $L^{1,n}({\Omega'})$:
$$
\|\varphi^{\ast}(f) \mid L^{1,n}(\Omega)\|\leq K^{\frac{1}{n}}\|f \mid L^{1,n}({\Omega'})\|
$$
for any $f\in L^{1,n}({\Omega'})$.
\end{lemma}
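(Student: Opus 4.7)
The plan is to prove the two implications of this biconditional separately, using the pointwise distortion inequality $|D\varphi(x)|^n \leq K |J(x,\varphi)|$ as the bridge between the quasiconformal condition and the $L^{1,n}$-norm inequality.

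For the forward direction (quasiconformal implies bounded composition operator), I would begin with a smooth function $f \in L^{1,n}(\Omega') \cap C^{\infty}(\Omega')$ and form the composition $\varphi^{\ast}(f) = f\circ\varphi$. Since $\varphi \in W^{1,n}_{\loc}(\Omega)$ is differentiable almost everywhere, the chain rule gives $|\nabla(f\circ\varphi)(x)| \leq |\nabla f(\varphi(x))|\,|D\varphi(x)|$ at a.e. $x \in \Omega$. Raising to the $n$-th power and applying the quasiconformal distortion inequality yields
\[
|\nabla(f\circ\varphi)(x)|^n \leq K\,|\nabla f(\varphi(x))|^n\,|J(x,\varphi)|.
\]
Integrating over $\Omega$ and invoking the change of variables formula (valid because $K$-quasiconformal mappings satisfy the Luzin $N$-property) converts the right-hand side to $K\int_{\Omega'} |\nabla f(y)|^n\,dy$. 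Taking $n$-th roots gives the inequality with constant $K^{1/n}$. A density argument then extends this from smooth $f$ to all $f \in L^{1,n}(\Omega')$.

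For the reverse direction, the plan is to reconstruct the quasiconformal condition from the norm bound. First, boundedness of $\varphi^{\ast}:L^{1,n}(\Omega')\to L^{1,n}(\Omega)$ combined with the capacitary description of Sobolev spaces forces $\varphi \in W^{1,n}_{\loc}(\Omega)$ with finite distortion; this is essentially the $p=q=n$ endpoint of the philosophy behind Theorem \ref{CompTh}. Then, by testing the composition inequality on coordinate-type functions $f(y) = \langle y - y_0, e\rangle\eta(y)$ localized near an arbitrary Lebesgue point $x_0$ of $D\varphi$ and $J(\cdot,\varphi)$, and shrinking the support, one extracts the pointwise inequality $|D\varphi(x_0)\, e|^n \leq K|J(x_0,\varphi)|$ for every unit vector $e$, which is exactly the $K$-quasiconformality condition.

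The main obstacle is the reverse implication: the endpoint case $p=q=n$ is not literally covered by the statement of Theorem \ref{CompTh} (which requires $q<p$), so I cannot simply quote it, and the localization/test-function argument needs the correct choice of Lebesgue points together with the change-of-variables formula to identify the limit. A further subtlety in the forward direction is justifying the chain rule for non-smooth $f$; this is handled by approximating $f$ by smooth functions in the $L^{1,n}(\Omega')$ seminorm and passing to the limit, using the already-established bound to guarantee that $\varphi^{\ast}$ is continuous on the smooth dense subspace.
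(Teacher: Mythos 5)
The paper itself does not prove this lemma: it is imported verbatim from Vodop'yanov--Gol'dshtein \cite{VG75}, so there is no in-paper argument to measure your proposal against. On its own merits, your forward direction is the standard and essentially complete argument: chain rule for $f\circ\varphi$ with $f$ smooth, the pointwise bound $|D\varphi|^n\le K|J(\cdot,\varphi)|$, the change of variables formula (legitimate because quasiconformal maps are a.e.\ differentiable and have the Luzin $N$-property), and a density step. Two points you should make explicit there: (i) the a.e.\ convergence of $f_k\circ\varphi$ to $f\circ\varphi$ in the density step uses that preimages of null sets under a quasiconformal map are null, not just the seminorm bound; and (ii) the word ``isomorphism'' in the statement also requires boundedness of $(\varphi^{-1})^{\ast}$, which you get because $\varphi^{-1}$ is again quasiconformal --- worth one sentence.

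The genuine gap is in the reverse direction, exactly where you flagged it, and it is not a routine detail. Testing on $f(y)=\langle y-y_0,e\rangle\,\eta(y)$ supported in $B(y_0,r)$ gives, modulo the cutoff error, an inequality of the form
\[
\int\limits_{\varphi^{-1}(B(y_0,r))}\bigl|D\varphi(x)^{T}e\bigr|^{n}\,dx\;\le\;K\,\bigl|B(y_0,r)\bigr|,
\]
and to extract $|D\varphi(x_0)^{T}e|^{n}\le K|J(x_0,\varphi)|$ you must differentiate the left-hand side over the family $\{\varphi^{-1}(B(y_0,r))\}$. These sets are not balls and need not shrink regularly to $x_0$, so the Lebesgue differentiation theorem does not apply to them; if instead you localize with balls $B(x_0,r)$ in the domain, the cutoff $\eta$ must live on $\varphi(B(x_0,r))\setminus\varphi(B(x_0,r/2))$, whose thickness is uncontrolled, so $\|\nabla\eta\|$ blows up. Neither variant of your test-function argument closes as stated. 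The standard resolution --- and the actual content of \cite{VG75} and of the $p=q$ endpoint of the machinery behind Theorem~\ref{CompTh} --- is to introduce the monotone quasi-additive set function $\Phi(A)=\sup\|\varphi^{\ast}f\mid L^{1,n}(\Omega)\|^{n}$ over functions $f$ supported in an open set $A\subset\Omega'$ with $\|f\mid L^{1,n}(A)\|\le1$, observe that the hypothesis gives $\Phi(A)\le K|A|$, and differentiate $\Phi$ rather than the preimage integrals; alternatively one passes through the capacity (or metric) definition of quasiconformality and invokes the equivalence of definitions. Until one of these is supplied, the reverse implication remains a plan rather than a proof.
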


Taking into account Theorem~\ref{PoinConst} and Lemma~\ref{L4.1} we obtain an universal weighted Sobolev inequality which holds in any simply connected planar domain with non-empty boundary. Denote by $h(x) =|J(x,\varphi)|$  the quasihyperbolic weight defined by a $K$-quasiconformal mapping $\varphi : \Omega \to \Omega'$.

\begin{theorem}\label{Th4.1}
Let $\Omega$ be a simply connected planar domain.
Then for any function $f \in W^{1,2}_{0}(\Omega)$, the weighted Sobolev-Poincar\'e inequality
\[
\left(\int\limits_\Omega |f(x)|^rh(x)dx\right)^{\frac{1}{r}} \leq A_{r,2}(h,\Omega)
\left(\int\limits_\Omega |\nabla f(x)|^2 dx\right)^{\frac{1}{2}}
\]
holds for any $r \geq 2$ with the constant $A_{r,2}(h,\Omega) \leq K^{\frac{1}{2}} A_{r,2}({\Omega'})$.
\end{theorem}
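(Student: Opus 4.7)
The plan is to reduce the weighted inequality on $\Omega$ to the unweighted inequality on $\Omega'$ via the change of variables $y = \varphi(x)$, picking up a factor of $K^{1/2}$ from the quasiconformal distortion of the Dirichlet integral.

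First I would introduce, for a given $f \in W^{1,2}_0(\Omega)$, the pullback $g := f \circ \varphi^{-1}$. Since $\varphi$ is a homeomorphism of $\Omega$ onto $\Omega'$ and quasiconformal mappings preserve the class $W^{1,2}_0$ under composition (this is where the Luzin property of $\varphi^{-1}$ and the capacity-zero identification of Sobolev classes will be used), one has $g \in W^{1,2}_0(\Omega')$. Then, using that $\varphi$ satisfies the change-of-variables formula almost everywhere with $|J(x,\varphi)|$ as density (valid for quasiconformal mappings, which possess the Luzin $N$-property and are differentiable a.e.), I would write
\[
\int_\Omega |f(x)|^r\, h(x)\,dx = \int_\Omega |f(x)|^r |J(x,\varphi)|\,dx = \int_{\Omega'} |g(y)|^r\, dy.
\]

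Next I would apply Theorem \ref{PoinConst} to $g$ on $\Omega'$, yielding
\[
\left(\int_{\Omega'} |g(y)|^r\, dy\right)^{\frac{1}{r}} \leq A_{r,2}(\Omega')\left(\int_{\Omega'} |\nabla g(y)|^2\, dy\right)^{\frac{1}{2}}.
\]
To return to $\Omega$, I invoke Lemma \ref{L4.1} with $n=2$. Since $\varphi^{-1}:\Omega'\to\Omega$ is also $K$-quasiconformal, the composition $(\varphi^{-1})^{\ast}(f) = f\circ\varphi^{-1} = g$ satisfies
\[
\left(\int_{\Omega'} |\nabla g(y)|^2\, dy\right)^{\frac{1}{2}} = \|g \mid L^{1,2}(\Omega')\| \leq K^{\frac{1}{2}}\,\|f \mid L^{1,2}(\Omega)\| = K^{\frac{1}{2}}\left(\int_\Omega |\nabla f(x)|^2\, dx\right)^{\frac{1}{2}}.
\]
Chaining the three displays gives the desired inequality with $A_{r,2}(h,\Omega) \leq K^{1/2} A_{r,2}(\Omega')$. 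Here simple connectedness enters by guaranteeing the existence of a quasiconformal (in fact conformal, by the Riemann mapping theorem) map $\varphi : \Omega \to \Omega'$ onto a suitable reference domain for which Theorem \ref{PoinConst} applies.

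The main obstacle I expect is not in the algebraic chaining but in justifying that composition with $\varphi^{-1}$ really does send $W^{1,2}_0(\Omega)$ into $W^{1,2}_0(\Omega')$ and that the change of variables identity holds pointwise a.e.\ even when $\Omega$ is a rough (non-Lipschitz) domain; both rest on the Luzin $N$-property of the quasiconformal homeomorphism and the identification of Sobolev spaces modulo sets of $2$-capacity zero. Once these regularity points are granted, the norm estimate itself is the direct combination of Theorem \ref{PoinConst} and Lemma \ref{L4.1}.
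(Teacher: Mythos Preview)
Your proposal is correct and follows essentially the same route as the paper's proof: change variables via $\varphi$ to convert the weighted $L^r$-norm on $\Omega$ into the unweighted $L^r$-norm of $g=f\circ\varphi^{-1}$ on $\Omega'$, apply Theorem~\ref{PoinConst} there, and then use Lemma~\ref{L4.1} (for the $K$-quasiconformal inverse $\varphi^{-1}$) to bound $\|g\mid L^{1,2}(\Omega')\|$ by $K^{1/2}\|f\mid L^{1,2}(\Omega)\|$. The only cosmetic difference is that the paper first carries out the chain for smooth $f$ and then passes to general $f\in W^{1,2}_0(\Omega)$ by approximation, whereas you argue directly that $g\in W^{1,2}_0(\Omega')$; this is precisely the regularity point you flagged, and the approximation argument is the standard way to resolve it.
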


\begin{proof}
By \cite{Ahl66} there exists a $K$-quasiconformal homeomorphism $\varphi : \Omega \to {\Omega'}$.

So, by Lemma~\ref{L4.1} the inequality
\begin{equation}\label{IN2.1}
||f \circ \varphi^{-1} \,|\, L^{1,2}({\Omega'})|| \leq K^{\frac{1}{2}} ||f \,|\, L^{1,2}(\Omega)||
\end{equation}
holds for any function $f \in L^{1,2}(\Omega)$.

Put $h(x):=|J(x,\varphi)|$.
Using the change of variable formula for the quasiconformal mappings \cite{VGR}, the inequality \eqref{IN2.1} and Theorem~\ref{PoinConst}, we get that for any smooth function $f\in L^{1,2}(\Omega)$
\begin{multline*}
\left(\int\limits_\Omega |f(x)|^rh(x)dx\right)^{\frac{1}{r}}
=\left(\int\limits_\Omega |f(x)|^r |J(x,\varphi)| dx\right)^{\frac{1}{r}} \\
=\left(\int\limits_{\Omega'} |f \circ \varphi^{-1}(y)|^rdy\right)^{\frac{1}{r}}
\leq A_{r,2}({\Omega'})
\left(\int\limits_{\Omega'} |\nabla (f \circ \varphi^{-1}(y))|^2dy\right)^{\frac{1}{2}} \\
\leq K^{\frac{1}{2}} A_{r,2}(\widetilde{\Omega})
\left(\int\limits_{\Omega} |\nabla f(x)|^2 dx\right)^{\frac{1}{2}}.
\end{multline*}

Approximating an arbitrary function $f \in W^{1,2}_{0}(\Omega)$ by smooth functions we have
$$
\left(\int\limits_\Omega |f(x)|^rh(x)dx\right)^{\frac{1}{r}} \leq
A_{r,2}(h,\Omega) \left(\int\limits_{\Omega} |\nabla f(x)|^2 dx\right)^{\frac{1}{2}},
$$
with the constant
$$
A_{r,2}(h,\Omega) \leq K^{\frac{1}{2}} A_{r,2}({\Omega}) \leq K^{\frac{1}{2}} \inf\limits_{p\in \left(\frac{2r}{r+2},2\right)}
\left(\frac{p-1}{2-p}\right)^{\frac{p-1}{p}}
\frac{\left(\sqrt{\pi}\cdot\sqrt[p]{2}\right)^{-1}|{\Omega'}|^{\frac{1}{r}}}{\sqrt{\Gamma(2/p) \Gamma(3-2/p)}}.
$$
\end{proof}

\subsection{ Estimates of Sobolev constants} In this section we consider (sharp) upper estimates of Sobolev constants in domains that satisfy the quasihyperbolic boundary condition.

In \cite{AK} it was proved that Jacobians of  quasiconformal mappings $\varphi: \Omega \to \Omega'$ belong to $L^{\beta}(\Omega)$ for some $\beta>1$ if and only if $\Omega'$ satisfy to a $\gamma$-quasihyperbolic boundary conditions for some $\gamma$. Note that the degree of integrability $\beta$ depends only on
$\Omega$ and the quasiconformality coefficient $K(\varphi)$.

Since we need the exact value of the integrability exponent $\beta$ for quasiconformal Jacobians, we consider an equivalent definition (of domains satisfying the quasihyperbolic boundary condition) in terms of integrability of Jacobians \cite{GPU19}.
A simply connected domain $\Omega'$ is called a $K$-quasi\-con\-for\-mal $\beta$-regular domain about a domain $\Omega$ if there exists a $K$-quasiconformal mapping $\varphi : \Omega \to \Omega'$ such that
{
$$
\int\limits_{\Omega} |J(x,\varphi)|^{\beta}~dx < \infty \quad\text{for some}\quad \beta >1,
$$
}
where $J(x,\varphi)$ is a Jacobian of a $K$-quasiconformal mapping $\varphi : \Omega \to \Omega'$.
The domain $\Omega' \subset \mathbb{R}^2$ is called a quasiconformal regular domain if it is a $K$-quasiconformal $\beta$-regular domain for some $\beta>1$.

The following theorem gives (sharp) upper estimates of (non-weighted) Sobolev constants in $K$-quasiconformal $\infty$-regular domains.
\begin{theorem}\label{Th4.3}
Let $\Omega$ be a $K$-quasi\-con\-formal $\infty$-regular domain about a domain ${\Omega'}$. Then for any function $f \in W^{1,2}_{0}(\Omega)$, the Sobolev inequality
\[
\|f\mid L^2(\Omega)\| \leq A_{2,2}(\Omega)
\|f\mid L^{1,2}(\Omega)\|
\]
holds with the constant
$$A_{2,2}(\Omega) \leq K^{\frac{1}{2}} A_{2,2}({\Omega'}) \big\|J_{\varphi^{-1}}\mid L^{\infty}({\Omega'})\big\|^{\frac{1}{2}},
$$
where $J_{\varphi^{-1}}$ is a Jacobian of the $K$-quasiconformal mapping $\varphi^{-1}:{\Omega'}\to\Omega$.
\end{theorem}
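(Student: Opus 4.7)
The natural route is to pull the $L^{2}(\Omega)$-integral across the quasiconformal map $\varphi:\Omega\to\Omega'$ and then invoke the non-weighted Sobolev inequality (Theorem~\ref{PoinConst}) on the target domain $\Omega'$, closing the estimate via Lemma~\ref{L4.1}. Concretely, for smooth $f\in C^{\infty}_{0}(\Omega)$ set $g:=f\circ\varphi^{-1}$. The change of variables formula for quasiconformal mappings gives
\[
\int_{\Omega}|f(x)|^{2}\,dx=\int_{\Omega'}|g(y)|^{2}\,|J(y,\varphi^{-1})|\,dy,
\]
and since by hypothesis $J_{\varphi^{-1}}\in L^{\infty}(\Omega')$, this is bounded above by $\|J_{\varphi^{-1}}\mid L^{\infty}(\Omega')\|\int_{\Omega'}|g(y)|^{2}\,dy$.

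Next I would apply Theorem~\ref{PoinConst} with $r=2$ to $g\in W^{1,2}_{0}(\Omega')$, yielding
\[
\int_{\Omega'}|g(y)|^{2}\,dy\leq A_{2,2}(\Omega')^{2}\int_{\Omega'}|\nabla g(y)|^{2}\,dy.
\]
Then Lemma~\ref{L4.1}, applied to the quasiconformal homeomorphism $\varphi^{-1}:\Omega'\to\Omega$ (whose distortion is again $K$), controls the Dirichlet integral on the right: $\|g\mid L^{1,2}(\Omega')\|\leq K^{1/2}\|f\mid L^{1,2}(\Omega)\|$. Combining the three estimates and taking square roots delivers
\[
\|f\mid L^{2}(\Omega)\|\leq K^{1/2}A_{2,2}(\Omega')\,\|J_{\varphi^{-1}}\mid L^{\infty}(\Omega')\|^{1/2}\,\|f\mid L^{1,2}(\Omega)\|,
\]
which is the claimed bound on $A_{2,2}(\Omega)$.

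The one delicate point to address is that the intermediate step requires $g=f\circ\varphi^{-1}$ to belong to $W^{1,2}_{0}(\Omega')$, not merely to $W^{1,2}(\Omega')$, since Theorem~\ref{PoinConst} is stated for functions vanishing on the boundary. For $f\in C^{\infty}_{0}(\Omega)$ the support of $g$ is the $\varphi$-image of a compact subset of $\Omega$, hence a compact subset of $\Omega'$, and quasiconformality together with the $L^{\infty}$-bound on $J_{\varphi^{-1}}$ guarantees $g\in W^{1,2}_{0}(\Omega')$. The general case $f\in W^{1,2}_{0}(\Omega)$ is then handled by density, exactly as in the proof of Theorem~\ref{Th4.1}.

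The routine pieces (the change-of-variables identity and Lemma~\ref{L4.1}) do the work; the only real obstacle is checking that the $\infty$-regularity hypothesis is used in precisely the right place, namely in dominating the measure-transporting Jacobian $J_{\varphi^{-1}}$, and that the approximation argument for $W^{1,2}_{0}$ passes through this bound without spoiling the final constant.
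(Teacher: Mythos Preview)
Your argument is correct and follows essentially the same route as the paper's proof: pull the $L^{2}(\Omega)$-norm across $\varphi$ using the $L^{\infty}$-bound on $J_{\varphi^{-1}}$, apply the Sobolev inequality on $\Omega'$, and close with the composition estimate of Lemma~\ref{L4.1}. The paper packages the last two steps as an appeal to Theorem~\ref{Th4.1}, whereas you invoke Theorem~\ref{PoinConst} and Lemma~\ref{L4.1} directly, but the underlying chain of inequalities is identical; your treatment of the $W^{1,2}_{0}$ membership of $g=f\circ\varphi^{-1}$ via compact support and density is in fact more explicit than what the paper writes.
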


\begin{remark}
The constant $A_{2,2}^2({\Omega'})=1/\lambda_1({\Omega'})$, where $\lambda_1({\Omega'})$ is the first Dirichlet eigenvalue of Laplacian in a domain ${\Omega'}\subset\mathbb R^2$.
\end{remark}

\begin{proof}
Let a function $f\in L^2(\Omega)$. Since quasiconformal mappings possess the Luzin $N$-property, then $|J(x,\varphi)|^{-1}=|J(y,\varphi^{-1})|$ for almost all $x\in \Omega$ and for almost all $y=\varphi(x)\in \Omega'$.
So, the following inequality is correct:
\begin{multline*}
\left(\int\limits_{\Omega} |f(x)|^2~dx\right)^{\frac{1}{2}}
=\left(\int\limits_{\Omega} |f(x)|^2|J(x,\varphi)|^{-1}|J(x,\varphi)|~dx\right)^{\frac{1}{2}} \\
\leq \|J_{\varphi} \mid L^{\infty}(\Omega)\|^{-\frac{1}{2}} \left(\int\limits_{\Omega} |f(x)|^2|J(x,\varphi)|~dx\right)^{\frac{1}{2}}.
\end{multline*}

In turn by Theorem~\ref{Th4.1} we have
\begin{multline*}
\left(\int\limits_{\Omega} |f(x)|^2~dx\right)^{\frac{1}{2}}
\leq \|J_{\varphi^{-1}} \mid L^{\infty}(\Omega')\|^{\frac{1}{2}} \left(\int\limits_{\Omega'} |f \circ \varphi^{-1}(y)|^2~dy\right)^{\frac{1}{2}} \\
\leq K^{\frac{1}{2}} A_{2,2}({\Omega'}) \big\|J_{\varphi^{-1}}\mid L^{\infty}({\Omega'})\big\|^{\frac{1}{2}}
\left(\int\limits_{\Omega} |\nabla f(x)|^2~dx\right)^{\frac{1}{2}}
\end{multline*}
for any $f\in W^{1,2}_0(\Omega)$.
\end{proof}

\section{Lower estimates}

We consider the Dirichlet eigenvalue problem in the weak formulation:
\[
\int\limits_{\Omega} \left\langle \nabla u(x), \nabla v(x) \right\rangle dx
= \lambda \int\limits_{\Omega} u(x) v(x)~dx, \quad v \in W_0^{1,2}(\Omega).
\]

Recall that the first eigenvalue of the Dirichlet Laplacian is defined by

\[
\lambda_1(\Omega)= \inf_{u \in W_0^{1,2}(\Omega) \setminus \{0\}} \frac{\int\limits_{\Omega} |\nabla u(x)|^2~dx}{\int\limits_{\Omega} |u(x)|^2~dx}\,.
\]
In other words, this is the sharp constant in the Sobolev inequality
\[
\left( \int\limits_{\Omega} |u(x)|^2~dx \right)^{\frac{1}{2}} \leq A_{2,2}(\Omega)
\left( \int\limits_{\Omega} |\nabla u(x)|^2~dx \right)^{\frac{1}{2}}, \quad u \in W_0^{1,2}(\Omega).
\]

\begin{theorem}\label{l-est}
Let $\Omega$ be a $K$-quasiconformal $\infty$-regular domain about $\Omega'$. Then
\[
\lambda_1(\Omega) \geq \frac{\lambda_1(\Omega')}{K ||J_{\varphi^{-1}}\,|\,L^{\infty}(\Omega')||},
\]
where $J_{\varphi^{-1}}$ is a Jacobian of the $K$-quasiconformal mapping $\varphi^{-1}:{\Omega'}\to\Omega$.
\end{theorem}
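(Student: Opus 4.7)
The plan is to obtain the eigenvalue bound as an essentially one-line corollary of Theorem~\ref{Th4.3}, using the variational characterization of $\lambda_1$ as (the reciprocal square of) the sharp constant in the $L^2$ Sobolev--Poincar\'e inequality for $W^{1,2}_0$. The preparatory work — building the weighted inequality out of quasiconformal composition and then absorbing the weight via the $L^\infty$-bound on the Jacobian — has already been done in Theorem~\ref{Th4.3}, so there is very little left to do.

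First, I would recall from the statement of the Dirichlet problem that
\[
\lambda_1(\Omega) \;=\; \frac{1}{A_{2,2}(\Omega)^{2}},
\]
where $A_{2,2}(\Omega)$ denotes the sharp constant in the Sobolev inequality $\|u\mid L^{2}(\Omega)\|\leq A_{2,2}(\Omega)\|u\mid L^{1,2}(\Omega)\|$ for $u \in W^{1,2}_0(\Omega)$, and analogously $\lambda_1(\Omega')=A_{2,2}(\Omega')^{-2}$, as recorded in the Remark following Theorem~\ref{Th4.3}.

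Next, I would invoke Theorem~\ref{Th4.3} directly: since $\Omega$ is a $K$-quasiconformal $\infty$-regular domain about $\Omega'$, we have
\[
A_{2,2}(\Omega) \;\leq\; K^{\frac{1}{2}}\, A_{2,2}(\Omega')\, \bigl\|J_{\varphi^{-1}}\mid L^{\infty}(\Omega')\bigr\|^{\frac{1}{2}}.
\]
Squaring and taking reciprocals then yields
\[
\lambda_1(\Omega) \;=\; \frac{1}{A_{2,2}(\Omega)^{2}}
\;\geq\; \frac{1}{K\,A_{2,2}(\Omega')^{2}\,\bigl\|J_{\varphi^{-1}}\mid L^{\infty}(\Omega')\bigr\|}
\;=\; \frac{\lambda_1(\Omega')}{K\,\bigl\|J_{\varphi^{-1}}\mid L^{\infty}(\Omega')\bigr\|},
\]
which is exactly the claimed inequality.

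Given this structure, there is essentially no serious obstacle: the quasiconformal change-of-variables, the Luzin $N$-property, and the passage from smooth to $W^{1,2}_0$ functions have already been handled inside the proof of Theorem~\ref{Th4.3}. The only point deserving care is the order in which the $L^\infty$-bound on $J_{\varphi^{-1}}$ and the variational characterization are applied, so that reciprocation preserves the correct direction of inequality; but since $A_{2,2}$ is an upper constant in the Sobolev inequality (equivalently, $\lambda_1^{-1/2}$), the direction is forced and the argument is immediate.
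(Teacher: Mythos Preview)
Your proposal is correct and follows essentially the same approach as the paper: invoke Theorem~\ref{Th4.3} to bound $A_{2,2}(\Omega)$, then use the identity $\lambda_1(\cdot)=A_{2,2}(\cdot)^{-2}$ (equivalently, the Min--Max principle) to convert the Sobolev-constant estimate into the eigenvalue bound. The paper's proof is exactly this two-step argument, with no additional ingredients.
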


\begin{proof}
According to the Min-Max Principle and Theorem~\ref{Th4.3} we have
\[
\int \limits_{\Omega}|f(x)|^2dx \leq A^2_{2,2}(\Omega) \int \limits_{\Omega}|\nabla f(x)|^2dx,
\]
where
\[
A_{2,2}(\Omega) \leq K^{\frac{1}{2}} A_{2,2}({\Omega'}) \big\|J_{\varphi^{-1}}\mid L^{\infty}({\Omega'})\big\|^{\frac{1}{2}}.
\]
Given the equality $A_{2,2}(\widetilde{\Omega})=\lambda_1(\widetilde{\Omega})^{-\frac{1}{2}}$,
$\widetilde{\Omega}=\Omega, \Omega'$, we get
\[
\lambda_1(\Omega) \geq \frac{\lambda_1(\Omega')}{K ||J_{\varphi^{-1}}\,|\,L^{\infty}(\Omega')||}.
\]
\end{proof}

From this theorem and the domain monotonicity property for the Dirichlet eigenvalues we obtain growth monotonicity estimates of the principal Dirichlet-Laplacian eigenvalue in quasiconformal regular domains.

\begin{theorem}\label{main}
Let $\Omega$ be a $K$-quasiconformal $\infty$-regular domain about $\Omega'$. Suppose $\Omega' \supset \Omega$, then
\[
\lambda_1(\Omega)-\lambda_1(\Omega') \geq \frac{1-K ||J_{\varphi^{-1}}\,|\,L^{\infty}(\Omega')||}{K ||J_{\varphi^{-1}}\,|\,L^{\infty}(\Omega')||}\lambda_1(\Omega'),
\]
where $J_{\varphi^{-1}}$ is a Jacobian of the $K$-quasiconformal mapping $\varphi^{-1}:{\Omega'}\to\Omega$.
\end{theorem}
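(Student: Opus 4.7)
The plan is to derive Theorem~\ref{main} as an essentially algebraic consequence of Theorem~\ref{l-est}, using the classical domain monotonicity property only to interpret the result as a genuine \emph{growth} estimate on the eigenvalue gap. No new analysis is required beyond what has already been established in the preceding sections, so I do not expect any substantial obstacle.

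First I would invoke Theorem~\ref{l-est} applied to the pair $(\Omega, \Omega')$ (where $\Omega$ is a $K$-quasiconformal $\infty$-regular domain about $\Omega'$), which gives
\[
\lambda_1(\Omega) \;\geq\; \frac{\lambda_1(\Omega')}{K \, \|J_{\varphi^{-1}} \mid L^{\infty}(\Omega')\|}.
\]
Next I would subtract $\lambda_1(\Omega')$ from both sides and factor $\lambda_1(\Omega')$ out on the right, producing
\[
\lambda_1(\Omega) - \lambda_1(\Omega') \;\geq\; \left(\frac{1}{K \, \|J_{\varphi^{-1}} \mid L^{\infty}(\Omega')\|} - 1\right) \lambda_1(\Omega').
\]
Combining the two terms in parentheses over a common denominator yields precisely the stated inequality.

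Finally, I would comment on the role of the containment hypothesis $\Omega' \supset \Omega$: it is not used inside the derivation of the inequality itself, but by the classical domain monotonicity of Dirichlet eigenvalues it guarantees $\lambda_1(\Omega) - \lambda_1(\Omega') \geq 0$, so the lower bound is consistent. The bound becomes genuinely informative exactly in the regime $K \, \|J_{\varphi^{-1}} \mid L^{\infty}(\Omega')\| < 1$, in which case it provides a quantitative, quasiconformally weighted improvement over the trivial estimate $\lambda_1(\Omega) \geq \lambda_1(\Omega')$. The rose-petal example of the introduction, where $K\|J_{\psi} \mid L^\infty(\mathbb D)\| = a^2 < 1$, falls squarely in this regime and illustrates the use of the theorem.
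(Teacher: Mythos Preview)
Your proposal is correct and follows essentially the same approach as the paper: invoke Theorem~\ref{l-est}, subtract $\lambda_1(\Omega')$ from both sides, and rewrite. The paper also records the domain monotonicity consequence $\lambda_1(\Omega)\geq\lambda_1(\Omega')$ up front, while you place it at the end as an interpretive remark, but the substance is identical.
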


\begin{proof}
By the theorem condition $\Omega' \supset \Omega$. Hence $\lambda_1(\Omega)\geq\lambda_1(\Omega')$.
By Theorem \ref{l-est} we have
\[
\lambda_1(\Omega) \geq \frac{\lambda_1(\Omega')}{K ||J_{\varphi^{-1}}\,|\,L^{\infty}(\Omega')||}.
\]
Performing simple actions in this inequality yields the required result, i.e.:
\[
\lambda_1(\Omega)-\lambda_1(\Omega') \geq \frac{1-K ||J_{\varphi^{-1}}\,|\,L^{\infty}(\Omega')||}{K ||J_{\varphi^{-1}}\,|\,L^{\infty}(\Omega')||}\lambda_1(\Omega').
\]
\end{proof}

In the case of quasiconformal mappings $\varphi:\Omega \to \mathbb D$, Theorem~\ref{main} can be reformulated as
\begin{theorem}\label{disk}
Let $\Omega$ be a $K$-quasiconformal $\infty$-regular domain about the unit disc $\mathbb D$. Suppose $\mathbb D \supset \Omega$, then
\[
\lambda_1(\Omega)-\lambda_1(\mathbb D) \geq \frac{1-K ||J_{\varphi^{-1}}\,|\,L^{\infty}(\mathbb D)||}{K ||J_{\varphi^{-1}}\,|\,L^{\infty}(\mathbb D)||} j_{0,1}^2,
\]
where $j_{0,1} \approx 2.4048$ is the first positive zero of the Bessel function $J_0$ and $J_{\varphi^{-1}}$ is a Jacobian of the $K$-quasiconformal mapping $\varphi^{-1}:\mathbb D \to \Omega$.
\end{theorem}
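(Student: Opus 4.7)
The plan is to derive Theorem~\ref{disk} as a direct specialization of Theorem~\ref{main}, substituting $\Omega' = \mathbb D$ and inserting the explicit value of the first Dirichlet eigenvalue of the unit disc. Since Theorem~\ref{main} has already done the essential work of combining the weighted Sobolev inequality (Theorem~\ref{Th4.3}) with the domain monotonicity property, what remains here is really only a bookkeeping step together with one classical fact.

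First I would verify that the hypotheses of Theorem~\ref{main} are satisfied: the assumption that $\Omega$ is a $K$-quasiconformal $\infty$-regular domain about $\mathbb D$ provides the quasiconformal mapping $\varphi : \Omega \to \mathbb D$ with $J_{\varphi^{-1}} \in L^{\infty}(\mathbb D)$, and the hypothesis $\mathbb D \supset \Omega$ is exactly the containment condition required. Applying Theorem~\ref{main} with $\Omega' = \mathbb D$ yields
\[
\lambda_1(\Omega)-\lambda_1(\mathbb D) \geq \frac{1-K ||J_{\varphi^{-1}}\,|\,L^{\infty}(\mathbb D)||}{K ||J_{\varphi^{-1}}\,|\,L^{\infty}(\mathbb D)||}\lambda_1(\mathbb D).
\]

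Next I would invoke the classical formula $\lambda_1(\mathbb D) = j_{0,1}^2$, where $j_{0,1}$ is the first positive zero of the Bessel function $J_0$. This follows from separation of variables for the Laplacian on the disc: the first Dirichlet eigenfunction is radial and proportional to $J_0(j_{0,1}|x|)$, giving eigenvalue $j_{0,1}^2$ on the unit disc. Substituting this into the inequality above produces exactly the claimed estimate.

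There is essentially no obstacle in this argument; the theorem is a corollary and the only non-trivial input beyond Theorem~\ref{main} is the known spectral value of the unit disc, which may simply be cited. One minor point worth checking carefully is the direction of the inequality and the sign of the factor $1-K\|J_{\varphi^{-1}}\,|\,L^{\infty}(\mathbb D)\|$: when this quantity is negative, the lower bound is weaker than the trivial bound $\lambda_1(\Omega) - \lambda_1(\mathbb D) \geq 0$ coming from domain monotonicity, and the estimate is interesting precisely when $K\|J_{\varphi^{-1}}\,|\,L^{\infty}(\mathbb D)\| < 1$, as in the rose-petal example from the introduction.
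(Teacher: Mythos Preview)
Your proposal is correct and matches the paper's approach exactly: the paper simply states Theorem~\ref{disk} as a reformulation of Theorem~\ref{main} in the special case $\Omega'=\mathbb D$, with the substitution $\lambda_1(\mathbb D)=j_{0,1}^2$, and gives no separate proof. Your additional remarks on verifying hypotheses and on when the bound is nontrivial are accurate and harmless elaborations.
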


As examples, we consider the domains bounded by an epicycloid. Since
the domains bounded by an epicycloid are $K$-quasiconformal $\infty$-regular, we
can apply Theorem \ref{disk}, i.e.:

{\bf Example.} For $n \in \mathbb N$, the homeomorphism
\[
\psi(z)=A\left(z+\frac{z^n}{n}\right)+B\left(\overline{z}+\frac{\overline{z}^n}{n}\right), \quad z=x+iy, \quad A>B\geq0,
\]
is quasiconformal with $K=(A+B)/(A-B)$ and maps the unit disc $\mathbb D$ onto the domain $\Omega_n$ bounded by an epicycloid of
$(n - 1)$ cusps, inscribed in the ellipse with semi-axes $(A+B)(n+1)/n$ and $(A-B)(n+1)/n$.

We calculate the Jacobian of mapping $\psi$ by the formula \cite{Ahl66}:
\[
J(z,\psi)=|\psi_z|^2-|\psi_{\overline{z}}|^2.
\]
Here
$$
\psi_z=\frac{1}{2}\left(\frac{\partial \psi}{\partial x}-i\frac{\partial \psi}{\partial y}\right) \quad \text{and} \quad
\psi_{\overline{z}}=\frac{1}{2}\left(\frac{\partial \psi}{\partial x}+i\frac{\partial \psi}{\partial y}\right).
$$
A direct calculation yields
\[
\psi_z=A(1+z^{n-1}), \quad \psi_{\overline{z}}=B(1+\overline{z}^{n-1}).
\]
Hence
\[
J(z,\psi)=(A^2-B^2)|1+z^{n-1}|^2
\]
and
\[
||J_{\psi}\,|\,L^{\infty}(\mathbb D)||
=\esssup\limits_{|z|<1}\left[(A^2-B^2)|1+z^{n-1}|^2\right]
\leq 4(A^2-B^2).
\]

Note that in the case $A+B<1/2$ the quality $K||J_{\psi}\,|\,L^{\infty}(\mathbb D)|| \leq 4(A+B)^2<1$ and $\mathbb D \supset \Omega_n$.

Thus, for $A+B<1/2$, by Theorem~\ref{disk} we have
\[
\lambda_1(\Omega_n)-\lambda_1(\mathbb D) \geq \frac{1-4(A+B)^2}{4(A+B)^2} j_{0,1}^2.
\]

\textbf{Acknowledgements.} The author thanks Vladimir Gol'dshtein and Alexander Ukhlov for useful discussions and valuable comments. This work was supported by the Ministry of Science and Higher Education of Russia (agreement No. 075-02-2021-1392) and RSF Grant No. 20-71-00037.

\vskip 0.3cm

\vskip 0.3cm

Division for Mathematics and Computer Sciences, Tomsk Polytechnic University, 634050 Tomsk, Lenin Ave. 30, Russia; Regional Scientific and Educational Mathematical Center, Tomsk State University, 634050 Tomsk, Lenin Ave. 36, Russia
							
\emph{E-mail address:} \email{vpchelintsev@vtomske.ru}   \\

\end{document}